\newtheorem{theorem}{Theorem}
\newtheorem{lemma}{Lemma}
\theoremstyle{definition}
\theoremstyle{remark}
\newcommand{\N}{\mathbb N}
\newcommand{\Q}{\mathbb Q}
\numberwithin{equation}{section}
\numberwithin{lemma}{section}
\subjclass[2010]{Primary: 11D72. Secondary: 11E76, 14G05, 14J70.}
\begin{document}

\title{Rational lines on cubic hypersurfaces II}

\author{Julia Brandes}
\address{JB: Mathematical Sciences, Chalmers Institute of Technology and University of Gothenburg, 412 96 G{\"o}teborg, Sweden}
\email{brjulia@chalmers.se}

\author{Rainer Dietmann}
\address{RD: Department of Mathematics, Royal Holloway, University of London, Egham,
TW20 0EX, UK}
\email{rainer.dietmann@rhul.ac.uk}

\author{David B. Leep}
\address{DL: Department of Mathematics\\University of Kentucky\\Lexington, KY
40506-0027 U.S.A.}
\email{leep@uky.edu}

\begin{abstract}
	We show that any rational cubic hypersurface of dimension at least $33$ defined over a number field $K$ vanishes on a $K$-rational projective line, reducing the previous lower bound of Wooley by two. For $K=\Q$ we can reduce the bound to $29$. The main ingredients are a result on linear spaces on quadratic forms over suitable non-real quadratic field extensions, and recent work of Bernert and Hochfilzer on cubic forms over imaginary quadratic number fields for the rational case.
\end{abstract}

\maketitle

\section{Introduction}

A celebrated theorem of Birch \cite{Bir:57} asserts that any rationally defined projective hypersurface of odd degree contains large rational linear spaces, provided only that the dimension of the hypersurface be sufficiently large in terms of its degree and the dimension of the linear space. Unfortunately, his methods, although constructive, are famously inefficient and believed to be quite far from the truth. This is true even in some of the simplest cases, such as that of finding rational linear spaces, or even just lines, on cubic hypersurfaces. In particular, Wooley \cite[Theorem 2(b)]{W} proved that every cubic hypersurface defined by a homogeneous polynomial in at least $37$ variables with coefficients in any number field $K$ contains a $K$-rational line. For $K=\Q$ and under the additional assumption that the cubic hypersurface is smooth, the first two authors \cite[Theorem~1.1]{BD} reduced that number to $31$. The aim of this note is to improve on Wooley's result for general $K$, thereby also extending the result of \cite{BD} to the singular case. 

For any number field $L \mid \Q$ 
denote by $\gamma_{L}$ the least integer $n$ with the property that any
$L$-rational cubic form in at least $n$ variables has a non-trivial $L$-rational zero. Note that this is well-defined since $\gamma_L \le 16$ for any number field $L$ was confirmed by Pleasants \cite{P}.
For a given number field $K \mid \Q$ define then
\begin{align*}
	\delta_K = \sup_{L} \gamma_L,
\end{align*}
where the supremum is over all non-real quadratic extensions $L \mid K$. Note that $\delta_K \le 16$ in general by the aforementioned work of Pleasants, and that $\delta_\Q \le 14$ by recent work of Bernert and Hochfilzer \cite{BH} who established $\gamma_L \le 14$ for all imaginary quadratic number fields $L$. In this notation,
we obtain the following result on linear spaces on cubic hypersurfaces.

\begin{theorem}	\label{f18}
	Let $K \mid \Q$ be a number field, let $n \in \N$ and let $C \in K[X_1, \ldots, X_n]$ be a cubic form. If $n \ge 2\delta_K+3$, then $C$ vanishes on a $K$-rational projective line.
\end{theorem}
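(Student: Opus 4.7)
The plan is to construct a $\Q$-rational line on $C$ by producing two $\Q$-linearly independent vectors $\mathbf{x}, \mathbf{y} \in \Q^n$ on whose $\Q$-span $C$ vanishes identically. Writing $\Psi$ for the symmetric trilinear form associated to $C$ and expanding $C(s\mathbf{x} + t\mathbf{y})$ as a binary cubic in $s,t$, this amounts to the simultaneous vanishing of
\[
C(\mathbf{x}), \quad \Psi(\mathbf{x},\mathbf{x},\mathbf{y}), \quad \Psi(\mathbf{x},\mathbf{y},\mathbf{y}), \quad C(\mathbf{y}).
\]
Once $\mathbf{x}$ is fixed, the three remaining conditions on $\mathbf{y}$ are respectively linear, quadratic, and cubic, and I would solve them in this order.

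Since $n \ge 2\gamma + 3$ comfortably exceeds the known bound $\gamma_\Q \le 16$, the cubic $C$ has a nontrivial $\Q$-rational zero $\mathbf{x}$. Fixing such an $\mathbf{x}$, the condition $L(\mathbf{y}) := \Psi(\mathbf{x},\mathbf{x},\mathbf{y}) = 0$ defines a $\Q$-rational subspace $V \subseteq \Q^n$ of dimension at least $2\gamma + 2$, on which we have the $\Q$-rational quadratic form $Q(\mathbf{y}) := \Psi(\mathbf{x},\mathbf{y},\mathbf{y})$, with $\mathbf{x}$ already isotropic. The problem reduces to finding $\mathbf{y} \in V \setminus \langle \mathbf{x} \rangle$ satisfying $Q(\mathbf{y}) = C(\mathbf{y}) = 0$.

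The decisive step is the quadratic one, and it is here that the paper's advertised main ingredient --- a result on totally isotropic linear spaces of quadratic forms over suitable imaginary quadratic extensions of $\Q$ --- comes into play. I would apply it to $Q|_V$ to secure an imaginary quadratic field $K = \Q(\sqrt{-d})$ together with a Galois-stable totally isotropic subspace $U \subseteq V \otimes_\Q K$ of $K$-dimension $\gamma + 1$, which descends to a $\Q$-rational totally isotropic subspace $U_\Q \subseteq V$ of the same dimension. The virtue of working over an imaginary quadratic $K$ is that its unique archimedean place is complex, where any quadratic form is hyperbolic, removing the definiteness obstruction that limits the Witt index over $\Q$; combined with the fact that anisotropic quadratic forms over $p$-adic local fields have dimension at most $4$, a Hasse--Minkowski analysis over $K$ pushes the Witt index close to its theoretical maximum $\lfloor\dim V/2\rfloor = \gamma + 1$.

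On the subspace $U_\Q$, the restriction $C|_{U_\Q}$ is a $\Q$-rational cubic in $\gamma + 1$ variables. In the range relevant to the theorem this exceeds $\gamma_\Q$, so $C|_{U_\Q}$ has a nontrivial $\Q$-rational zero $\mathbf{y}$; since $\mathbf{y} \in U_\Q \subseteq V$ one automatically has $Q(\mathbf{y}) = 0$ and $L(\mathbf{y}) = 0$, and with the extra dimension $\mathbf{y}$ can be taken linearly independent of $\mathbf{x}$, completing the rational line. The principal obstacle is the quadratic step: the required Witt-index bound over $K$ is essentially one above the level provided by a purely local-to-global argument, so the ``suitable'' field $K$ must be chosen carefully in terms of $Q|_V$, and the resulting $U$ must be arranged so as to descend compatibly with the isotropy of $\mathbf{x}$.
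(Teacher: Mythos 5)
Your overall architecture (trilinear form, base point $\mathbf{x}$, then a linear--quadratic--cubic cascade for $\mathbf{y}$) matches the paper's, but there is a genuine gap at the decisive quadratic step. You assert that the totally isotropic subspace $U \subseteq V \otimes_\Q K$ of dimension $\gamma+1$ supplied by the main ingredient can be taken Galois-stable and hence descends to a $\Q$-rational totally isotropic subspace $U_\Q \subseteq V$ of the same dimension. This cannot be right: a $\Q$-rational subspace of dimension $\gamma+1$ on which $Q$ vanishes would mean $Q$ has Witt index $\gamma+1$ over $\Q$ itself, and the whole reason for passing to an imaginary quadratic extension is that $Q = \Phi(\mathbf{x},\cdot,\cdot)$ may be definite over $\Q$ and so have no non-trivial rational isotropic vectors at all. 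The definiteness obstruction you correctly identify at the archimedean place is removed only for genuinely $K$-rational subspaces; it reappears the moment you demand Galois stability, since $U \cap \overline{U}$ descends and Witt index is not improved by descent.

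The paper's route accepts that the isotropic space $V$ is only $K$-rational, and consequently the restricted cubic must be solved over $K$, not over $\Q$ --- this is precisely why the constant in the theorem is $\gamma = \sup_K \gamma_K$ taken over imaginary quadratic fields rather than $\gamma_\Q$, a distinction your argument erases. One then obtains a $K$-rational point $\mathbf{y} \in K^n$ such that $C$ vanishes on the $K$-span of the rational $\mathbf{x}$ and of $\mathbf{y}$, and the descent to $\Q$ happens only at this final stage, for the two-dimensional space, via Lemma 2.2 of Wooley \cite{W}: writing $\mathbf{y} = \mathbf{y}_0 + \sqrt{-d}\,\mathbf{y}_1$, that lemma extracts from the $K$-plane a rational projective line on $C=0$. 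Without this last descent step (or the correct use of $\gamma_K$ in the cubic step) your argument does not close. A smaller point: to guarantee $\mathbf{y} \notin \langle\mathbf{x}\rangle$ the paper imposes an auxiliary linear condition $\Lambda(\mathbf{y})=0$ with $\Lambda(\mathbf{x}) \ne 0$ before restricting, which costs one further variable and is cleaner than appealing to ``the extra dimension''.
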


Partly as a consequence of Theorem~\ref{f18}, our main theorem is now as follows.  
\begin{theorem}	\label{top_gun}
	Let $K \mid \Q$ be a number field, and let $C(X_1, \ldots, X_n) \in K[X_1, \ldots, X_n]$ be a cubic form. Then $C$ vanishes on a $K$-rational projective line provided that
	\begin{itemize}
  		\item[(a)] $n \ge 35$ in general; or
  		\item[(b)] $n \ge 33$ and $K$ is imaginary quadratic; or
  		\item[(c)] $n \ge 31$ and $K=\Q$.
	\end{itemize}
\end{theorem}

Observe that necessarily $\delta_K \ge 10$, so $n \ge 23$ is a hard limit of the method. We remark that it is quite unclear what the `true' bound should be, but it was shown in \cite[Theorem~1.4]{BD} that for $K=\Q$ at the very least $12$ variables are needed in order to avoid potential $p$-adic obstructions. \smallskip

In our proof we follow largely the same strategy as in \cite{W}.
In other words, we begin by finding a point $\mathbf x \in K^n$
satisfying $C(\mathbf x)=0$, and then seek another point $\mathbf y$
having the property that the pair $(\mathbf x, \mathbf y)$ spans a projective line on the hypersurface $C=0$. In order to find the point $\mathbf y$, we are led to find non-trivial solutions to a system of one linear, one quadratic and one cubic equation. The key to the strategy is to find a large linear space on which the quadratic equation vanishes, and then solve the cubic equation after restricting to this linear space. Since the quadratic equation may be definite unless
$K$ is totally imaginary, it has been customary to solve this problem over
a suitable non-real quadratic field extension $L \mid K$ to ensure solubility. The novelty of our approach lies in how this choice is made. Unlike Wooley \cite{W}, who used $L=K(i)$, we optimise our choice of $L$, which enables
us to find high-dimensional $L$-linear spaces on the quadratic form more efficiently. Our main result here is the following. 

\begin{theorem} \label{bell}
	Let $K \mid \Q$ be a number field, let $k,n \in \N$, and let 
	$$
		Q(X_1, \ldots, X_n) \in K[X_1, \ldots, X_n]
	$$
	be a quadratic form. If $n \ge 2k + 1$, then there exists a non-real quadratic field extension $L$ of $K$ such that $Q$ vanishes on an affine $L$-linear space of dimension at least $k$. 
\end{theorem}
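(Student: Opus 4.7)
The conclusion of Theorem~\ref{bell} is equivalent to showing that for some imaginary quadratic $K/\Q$ the form $Q$ has Witt index at least $k$ over $K$. Indeed, if $Q$ vanishes on an affine $K$-linear space $v_0 + V$ with $\dim V = k$, then expanding $Q(v_0 + w) = Q(v_0) + B(v_0, w) + Q(w)$ forces $v_0$ to be isotropic, $V$ to be totally isotropic for $Q$, and $v_0 \perp V$, so that $\langle v_0 \rangle + V$ is a totally isotropic subspace of dimension at least $k$; conversely, a $k$-dimensional totally isotropic linear subspace is an affine linear space of dimension $k$.

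My plan is to establish this Witt index bound by a Hasse--Minkowski argument. After diagonalizing $Q$ over $\Q$ and absorbing its radical (which is automatically totally isotropic) into the subspace being sought, I would reduce to the case where $Q$ is regular of rank $n \ge 2k+1$. Hasse--Minkowski identifies the Witt index of $Q$ over $K$ with the minimum of the Witt indices at the completions $K_v$. Since $K$ is imaginary quadratic, every archimedean completion is $\C$, yielding Witt index $\lfloor n/2 \rfloor \ge k$ automatically. At any finite place $v$, the bound $u(K_v) \le 4$ together with the parity constraint $\dim Q \equiv n \pmod 2$ forces Witt index $\ge k$ automatically as soon as $n \ge 2k+3$. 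Thus only $n = 2k+1$ and $n = 2k+2$ need care, and in these cases one must avoid anisotropic dimension $3$ or $4$, respectively, at every finite place of $K$.

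Let $T$ denote the finite set of rational primes $p$ at which the anisotropic dimension of $Q$ over $\Q_p$ equals $3$ (in the odd case) or $4$ (in the even case). I would then choose $K = \Q(\sqrt{m})$ with $m < 0$ squarefree so that every $p \in T$ is non-split in $K$, which makes $K_v/\Q_p$ a nontrivial quadratic extension for $v \mid p$. The crucial local input is the classical fact that the unique quaternion division algebra over a $p$-adic field contains every quadratic extension as a maximal subfield, and hence is split by any such extension; since the $3$- or $4$-dimensional anisotropic kernel of $Q_{\Q_p}$ is the pure, respectively full, norm form of this algebra, it becomes isotropic over $K_v$, and the anisotropic dimension at $v$ drops to at most $1$ or $2$, which is exactly what is needed. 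The existence of such $m$ is a routine Dirichlet exercise: prescribe $m$ to lie in a non-square residue class modulo each odd $p \in T$ (and to satisfy $m \not\equiv 1 \pmod 8$ if $2 \in T$), for instance by taking $m = -q$ for a prime $q$ in a suitable arithmetic progression.

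The principal technical point is the local analysis at the bad primes: one must verify that every anisotropic form of dimension $3$ or $4$ over $\Q_p$ becomes isotropic over every nontrivial quadratic extension, which reduces to the classical splitting of the local quaternion division algebra by any quadratic extension. Once this is in hand, the Hasse--Minkowski bookkeeping and the Dirichlet-type construction of $K$ are both straightforward.
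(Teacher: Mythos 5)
Your argument is correct, and its skeleton coincides with the paper's: both reduce the problem to a local--global principle (your ``Witt index over $K$ equals the minimum of the local Witt indices'' is exactly the content of Lemma~\ref{hauptlemma}, which the paper proves by induction from Hasse--Minkowski for lack of a reference --- you should cite or prove it rather than attribute it directly to Hasse--Minkowski), both note that the archimedean place is harmless precisely because $K$ is imaginary, and both construct $K=\Q(\sqrt{m})$ by imposing congruence conditions at the finitely many bad primes so that $K$ is locally a nontrivial quadratic extension there. Where you genuinely diverge is in the local analysis at the bad primes. The paper (Lemmata~\ref{tobe} and~\ref{geburtstag}) splits off $k-1$ hyperbolic planes, diagonalises the residual form in $v\ge 3$ variables, isolates a binary piece $a_1X_1^2+a_2X_2^2$ with matching $p$-adic valuations, and checks by a bare-hands Hensel computation that the single residue class $d\equiv a_1a_2 \pmod{m_p}$ renders it isotropic over $\Q_p(\sqrt{-d})$. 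You instead invoke the structure theory of forms over local fields: $u(K_\nu)\le 4$ settles $n\ge 2k+3$ for every $K$ at once, and for $n\in\{2k+1,2k+2\}$ the anisotropic kernel of dimension $3$ or $4$ over $\Q_p$ is (similar to) the pure, respectively full, norm form of the local quaternion division algebra, which every quadratic extension splits. Your route is less elementary but more conceptual, and it yields a slightly stronger conclusion --- \emph{any} imaginary quadratic field in which all bad primes are non-split works, rather than one prescribed residue class per prime. The only points you leave implicit are minor: the finiteness of your set $T$ (clear, since after discarding the radical $T$ is contained in the primes dividing $2\det Q$, as in Lemma~\ref{tobe}) and the precise bookkeeping in the singular case, which the paper handles by the same radical-splitting observation you sketch.
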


For comparison, it follows from Leep \cite[Corollary 2.4]{Leep} that $Q$ vanishes on an affine $k$-dimensional linear space over \emph{all} totally
imaginary number fields when $n \ge 2k+3$.

Note that the condition $n \ge 2 k + 1$ in Theorem~\ref{bell} is sharp in general for all number fields $K$ as will be shown in the last section.

\textbf{Acknowledgements.} During the production of this manuscript,
the first author was supported by Project Grant 2022-03717 from
Vetenskapsr{\aa}det (Swedish Science Foundation). We are also grateful
to Christian Bernert and Leonhard Hochfilzer for alerting us to their work
\cite{BH} and sharing a preprint.

\section{Notation and preliminaries}

An algebraic number field $K$ is a finite extension of the rational field $\mathbb{Q}$.
A totally imaginary
number field is a number field having no real embeddings.
If $L \mid K$ is an extension of a number field $K$ where $L$ is
totally imaginary, we call $L \mid K$ a non-real extension.
Let $\mathcal{O}_K$ denote the ring of integers of $K$.  We will write $\mathcal{O}$ if the field $K$ is understood.  

Let $\mathfrak{p}$ be a nonzero prime ideal in $\mathcal{O}_K$, then we write $v_{\mathfrak{p}}(\cdot)$ for the normalised $\mathfrak p$-adic valuation in $K$. Let $K_{\mathfrak p}$ denote the completion of $K$ with respect to $v_{\mathfrak p}$, let $\mathcal{O}_{K,\mathfrak{p}}=\mathcal{O}_{\mathfrak{p}}$ be the ring of integers of $K_{\mathfrak p}$, and let $\mathfrak k_{\mathfrak p} = \mathcal{O}_{\mathfrak{p}} / \pi \mathcal{O}_{\mathfrak{p}}$ denote the residue field, $\pi$ being a uniformising element of $\mathcal O_{\mathfrak p}$. 
We also let $K_{\mathfrak p}$ denote the completion of $K$ with respect to any archimedean valuation of $K$, that is, the completion of $K$ with respect to any real or complex embedding of $K$.

For any field $K$, we write $K^\times$ and $(K^{\times})^2$ for the
non-zero elements and the non-zero squares of $K$, respectively. If $d \in (K^{\times})^2$, by some abuse of notation we write $\sqrt d \in K$ for either of the two solutions to $x^2 = d$; the specific choice here does not enter in our ensuing arguments.
Further, for a field $K$ and $c_1, \ldots, c_m \in K$, let $\langle c_1, \ldots, c_m \rangle$ denote the quadratic form $c_1 x_1^2 + \cdots + c_m x_m^2$. Let $\mathbb{H} = \langle 1, -1 \rangle$ and let $m \mathbb{H}$ denote an orthogonal direct sum of $m$ copies of $\mathbb{H}$.  That is, $m \mathbb{H}$ is the $2m$-dimensional quadratic form $\langle 1, -1, \ldots, 1, -1 \rangle$. We let $\perp$ denote an orthogonal direct sum.  Thus 
\[
	\langle c_1, \ldots, c_m \rangle \perp \langle d_1, \ldots, d_n \rangle = \langle c_1, \ldots, c_m, d_1, \ldots, d_n \rangle.
\]

One of our main ingredients is a local-to-global principle for linear spaces of zeros of quadratic forms, which extends the Hasse--Minkowski Theorem.

\begin{lemma}\label{hauptlemma}
	Let $k$ and $n$ be positive rational integers, let $K$ be a number field, and let $Q(X_1, \ldots, X_n) \in K[X_1, \ldots, X_n]$ be a non-singular quadratic form. Then $Q$ vanishes on a $K$-linear space of dimension $k$ if and only if for all completions $K_{\mathfrak p}$ of $K$ the form $Q$ vanishes on a $K_{\mathfrak p}$-linear space of dimension $k$.
\end{lemma}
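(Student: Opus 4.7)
The plan is to prove the non-trivial direction by induction on $k$, since the forward implication is immediate by base change from $K$ to $K_\nu$. The base case $k=1$ asks that $Q$ have a non-trivial $K$-rational zero whenever it has one over every completion, which is precisely the Hasse--Minkowski theorem (cited in the excerpt as \cite[Theorem 66:1]{OM}).

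For the inductive step, suppose the statement holds for linear spaces of dimension $k-1$, and assume $Q$ vanishes on a $k$-dimensional $K_\nu$-linear space for every place $\nu$ of $K$. In particular $Q$ has a non-trivial zero at every $\nu$, so by Hasse--Minkowski there exists $\mathbf{x}_1 \in K^n\setminus \{0\}$ with $Q(\mathbf{x}_1)=0$. Since $Q$ is non-singular, the associated symmetric bilinear form $B(\mathbf{x},\mathbf{y})=Q(\mathbf{x}+\mathbf{y})-Q(\mathbf{x})-Q(\mathbf{y})$ is non-degenerate, so there is $\mathbf{z} \in K^n$ with $B(\mathbf{x}_1,\mathbf{z}) \ne 0$; rescaling $\mathbf{z}$ and then setting $\mathbf{y}_1 = \mathbf{z}-Q(\mathbf{z})\mathbf{x}_1$ we obtain $Q(\mathbf{y}_1)=0$ and $B(\mathbf{x}_1,\mathbf{y}_1)=1$. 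Hence $H=K\mathbf{x}_1 \oplus K\mathbf{y}_1$ is a hyperbolic plane defined over $K$, and we obtain the orthogonal splitting $K^n = H \perp H^\perp$, with $Q|_{H^\perp}=:Q'$ a non-singular quadratic form in $n-2$ variables.

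At each completion, Witt's cancellation theorem applied to the local decomposition $Q \otimes K_\nu \cong H \perp Q' \otimes K_\nu$ shows that the local Witt index of $Q'$ is at least $k-1$, so $Q'$ vanishes on a $(k-1)$-dimensional $K_\nu$-linear space for every $\nu$. The induction hypothesis, applied to $Q'$, then furnishes a $(k-1)$-dimensional $K$-linear subspace $V \subseteq H^\perp$ on which $Q'$ vanishes. Since $\mathbf{x}_1 \notin H^\perp$ (as $B(\mathbf{x}_1,\mathbf{y}_1)=1$), the sum $K\mathbf{x}_1 \oplus V$ is a genuine $k$-dimensional subspace, and a direct expansion using $Q(\mathbf{x}_1)=0$, $Q'(v)=0$ and $B(\mathbf{x}_1,v)=0$ for $v \in V$ shows that $Q$ vanishes identically on it, completing the induction.

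Conceptually, there is no single hard step; the result is a fairly mechanical combination of Hasse--Minkowski (for the base case and for extracting the initial $K$-rational isotropic vector) with Witt's cancellation theorem (to descend from $k$-dimensional to $(k-1)$-dimensional local isotropy). The only point demanding genuine care is ensuring that the hyperbolic plane $H$ and its orthogonal complement $H^\perp$ are constructed over $K$ rather than merely over the $K_\nu$, so that the reduced form $Q'$ is itself defined over $K$ and the inductive hypothesis actually applies; this is why the argument requires $Q$ to be non-singular, which allows us to realise $H$ inside $K^n$ by solving a single $K$-linear equation followed by a $K$-rational translation.
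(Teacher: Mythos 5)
Your proof is correct. It uses the same skeleton as the paper's proof --- induction on $k$ with Hasse--Minkowski as the base case and as the key global input --- but the inductive step is run in the opposite order. The paper first applies the inductive hypothesis to $Q$ itself to produce a $(k-1)$-dimensional totally isotropic $K$-subspace, writes $Q$ in the normal form $X_1X_2+\cdots+X_{2k-3}X_{2k-2}+R$, observes that the local hypothesis forces $R$ to be isotropic at every place, and finishes with one application of Hasse--Minkowski to $R$. You instead apply Hasse--Minkowski first to extract a single $K$-rational isotropic vector, split off one hyperbolic plane $H$ over $K$, use Witt cancellation to see that $Q' = Q|_{H^\perp}$ has local Witt index at least $k-1$ everywhere, and then invoke the inductive hypothesis on $Q'$. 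The two arrangements are logically equivalent and cost the same; yours makes the Witt-cancellation step explicit (where the paper cites O'Meara for the fact that a $k$-dimensional totally isotropic subspace of $(k-1)H \perp R$ forces $R$ isotropic), and your explicit construction of the hyperbolic pair $(\mathbf{x}_1,\mathbf{y}_1)$ over $K$ is carried out correctly, including the check that $H$ and $H^\perp$ are defined over $K$ so that the inductive hypothesis applies to $Q'$.
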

\begin{proof}
	This is a special case of Theorem 66:3 in \cite{OM}. In the notation of the cited theorem, we need to put $U = \mathbb{H} \perp \ldots \perp \mathbb{H}$ (a $k$-fold sum of hyperbolic planes $\mathbb{H}$) and $V = K^n$.
\end{proof}

The following result provides local input for our application of the extended Hasse--Minkowski Theorem in the easier locally non-singular
case.

\begin{lemma}\label{lemma 2}
	Let $K$ be a number field, let $k \in \N$ and $n=2k+1$, let $a_1, \ldots, a_n \in \mathcal{O}_K$, and let $\mathfrak{p}$ be a nonzero prime ideal in $\mathcal{O}_K$ with $2a_1 a_2 \cdots a_n \notin \mathfrak{p}$.  Then 
	\[
		\langle a_1, \ldots, a_n \rangle \cong_{K_{\mathfrak{p}}} k \mathbb{H} \perp \langle (-1)^k a_1 a_2 \cdots a_n \rangle
	\]
	and $Q$ vanishes on a $k$-dimensional linear space over $K_{\mathfrak{p}}$.
\end{lemma}

\begin{proof}
	The proof is by induction on $k \ge 1$.
 Chevalley's theorem applied to the finite field $\mathfrak k_{\mathfrak p}$ implies that $\langle a_1, a_2, a_3 \rangle$ has a non-trivial zero
 over $\mathfrak k_{\mathfrak p}$.
 As $2 a_1 a_2 a_3 \notin \mathfrak{p}$, any such nontrivial zero is a
nonsingular zero and thus lifts by Hensel's lemma to a nontrivial zero over
$\mathcal{O}_{\mathfrak{p}}$. Thus, $\langle a_1, a_2, a_3 \rangle$ splits off a hyperbolic plane over $K_{\mathfrak{p}}$ so that 
$$
	\langle a_1, a_2, a_3 \rangle \cong_{K_{\mathfrak{p}}} \mathbb{H} \perp \langle c \rangle
$$ for some $c \in K_{\mathfrak{p}}^{\times}$ (see \cite[42.10, p.~95]{OM}).

	Comparing determinants and using that $\det \mathbb{H} = -1$ shows that we can make the choice $c = -a_1 a_2 a_3$.  Thus
	\[
		\langle a_1, a_2, a_3 \rangle \cong_{K_{\mathfrak{p}}} \mathbb{H} \perp \langle -a_1 a_2 a_3 \rangle.
	\]
	This gives 
	\[ 
		\langle a_1, \ldots, a_n \rangle \cong_{K_{\mathfrak{p}}} \mathbb{H} \perp \langle  -a_1a_2a_3, a_4, \ldots, a_n\rangle.
	\]
	Since $2(-a_1 a_2 a_3) a_4 a_5 \cdots a_n \notin \mathfrak{p}$, it follows by induction that 
	\begin{align*}
		\langle a_1, \ldots, a_n \rangle &\cong_{K_{\mathfrak{p}}} \mathbb{H} \perp (k-1) \mathbb{H} \perp \langle (-1)^{k-1} (-a_1 a_2a_3) a_4 \cdots a_n \rangle\\
		& \cong_{K_{\mathfrak{p}}} k \mathbb{H} \perp \langle (-1)^k a_1 a_2 \cdots a_n \rangle.
	\end{align*}
	As $\mathbb{H}$ vanishes on a $1$-dimensional space, it follows that a direct sum of $k$ of these forms vanishes on a $k$-dimensional space over $K_{\mathfrak{p}}$.
\end{proof}

\section{Quadratic forms under quadratic field extensions}

We are now in a position to prove Theorem \ref{bell}.
The quadratic form $Q$ can be diagonalised over $K$.  Thus there exist $a_1, \ldots, a_n \in K$ such that
\[
	Q \cong_{K} \langle a_1, \ldots, a_n \rangle,
\]
where $a_1, \ldots, a_n \in K$. 

Without loss of generality we can assume that $n=2k+1$, and we will further assume first that $a_1 \cdots a_n \ne 0$. At the end, we will handle the case when $a_1 \cdots a_n = 0$.

Note that after multiplying each $a_i$ by an appropriate nonzero square in $\mathcal{O}$ if necessary, we may assume that each $a_i \in \mathcal{O}$. From Lemma \ref{lemma 2} we already know that $Q$ vanishes on a $K_{\mathfrak{p}}$-linear space of affine dimension $k$ for all prime ideals $\mathfrak{p} \subset \mathcal{O}$ with $2a_1 a_2 \cdots a_n \notin \mathfrak{p}$.

Now let $\mathcal{P} = \{\mathfrak{p}_1, \ldots, \mathfrak{p}_r\}$ be the finite set of nonzero prime ideals $\mathfrak{p}$ in $\mathcal{O}$ such that $2a_1 a_2 \cdots a_n \in \mathfrak{p}$. By weak approximation (see \cite{OM}[11:8, p.~8]), choose $d \in \mathcal{O}$ such that the $\mathfrak{p}$-adic valuation gives $v_{\mathfrak{p}}(d) = 1$ for each $\mathfrak{p} \in \mathcal{P}$ and $d < 0$ with respect to each of the finitely many real embeddings of $K$.

Let $L = K(\sqrt{d})$. Then $L$ is a non-real quadratic extension of $K$. Indeed, we have 
\[
	[K_{\mathfrak{p}}(\sqrt{d}): K_{\mathfrak{p}}] = 2 \text{ for each $\mathfrak{p} \in \mathcal{P}$},
\]
because $v_{\mathfrak{p}}(d) = 1$:
Suppose that $\sqrt{d} \in K_{\mathfrak{p}}$ for some 
$\mathfrak{p} \in \mathcal{P}$. Then 
\[
	2v_{\mathfrak{p}}(\sqrt{d}) = v_{\mathfrak{p}}\big( \sqrt{d}^{\,\,2} \big) = v_{\mathfrak{p}}(d) = 1.
\]   
Thus $v_{\mathfrak{p}}(\sqrt{d}) = 1/2$, a contradiction as we had assumed $v_{\mathfrak p}$ to be normalised.

Recall that every regular quadratic form over $K_{\mathfrak{p}}$ in five or more variables is isotropic and thus splits off one hyperbolic plane as an orthogonal direct summand (see \cite[Chapter~VI, Theorem 2.12, p.~158]{Lam} or \cite{OM}[63:19, p.~170]).
Using this fact and recalling that $n = 2(k-1) + 3$, we see for each $\mathfrak{p} \in \mathcal{P}$ that
\[
	\langle a_1, \ldots, a_n \rangle \cong_{K_{\mathfrak{p}}} (k-1) \mathbb{H} \perp \langle  r,s,t \rangle,
\]
where $r,s,t \in K_{\mathfrak{p}}^{\times}$ depend on $\mathfrak{p}$.

We now show that $\langle  r,s,t \rangle$ is isotropic over $K_{\mathfrak{p}}(\sqrt{d})$.
The quadratic form $\langle r, s, t, rstd \rangle$ is isotropic over $K_{\mathfrak{p}}$ because its determinant lies in $d K_{\mathfrak{p}}^2$ and $d$ is not a square in $K_{\mathfrak{p}}$ (\cite[Chapter VI, Corollary 2.15, p.~159-160]{Lam}).
This implies that $\langle r, s, t, rst \rangle$ is isotropic over $K_{\mathfrak{p}}(\sqrt{d})$ because $d$ is a square in $K_{\mathfrak{p}}(\sqrt{d})$.
It follows that $\langle r, s, t \rangle$ is isotropic over $K_{\mathfrak{p}}(\sqrt{d})$ by \cite[42:12, p.~95]{OM}.
Thus $\langle r, s, t \rangle$ splits off a hyperbolic plane over $K_{\mathfrak{p}}(\sqrt{d})$ by \cite[42:10, p.~95]{OM}, so $\langle r, s, t \rangle \cong_{K_{\mathfrak{p}}(\sqrt{d})} \mathbb{H} \perp \langle c \rangle$ for some $c \in  K_{\mathfrak{p}}(\sqrt{d})^{\times}$. Consequently,
for each $\mathfrak{p} \in \mathcal{P}$ we conclude that
\begin{align*}
	\langle a_1, \ldots, a_n \rangle& \cong_{K_{\mathfrak{p}}} (k-1) \mathbb{H} \perp \langle  r,s,t \rangle \\
	& \cong_{K_{\mathfrak{p}}(\sqrt{d})} k \mathbb{H} \perp \langle c \rangle \\
	& \cong_{K_{\mathfrak{p}}(\sqrt{d})} k \mathbb{H} \perp \langle (-1)^k a_1 a_2 \cdots a_n \rangle,
\end{align*}
because the determinant on the right hand side must be $a_1 a_2 \cdots a_n$ times a square. It follows that $\langle a_1, \ldots, a_n \rangle$ and therefore $Q$ vanishes on a $K_{\mathfrak{p}}(\sqrt{d})$-linear space of affine dimension $k$ for every nonzero prime ideal $\mathfrak{p} \in \mathcal{O}$. The same holds trivially over each complex completion of $K$. There are no real completions because $L$ is a non-real quadratic extension of $K$.

Since every nonarchimedean completion of $L = K(\sqrt{d})$ has the form $K_{\mathfrak{p}}(\sqrt{d})$, the extended version of the Hasse--Minkowski Theorem given in Lemma \ref{hauptlemma} implies that $\langle a_1, \ldots, a_n \rangle$ and thus $Q$ vanishes on a $k$-dimensional space linear over $L$. \bigskip

We now deal with the general case where we might have $a_1 \cdots a_n = 0$. Suppose that $a_1 \cdots a_j \ne 0$ and $a_{j+1} = \cdots = a_n = 0$. Then $\langle a_1, \ldots, a_n \rangle$ vanishes trivially on an $(n-j)$-dimensional space over $K$. If $n-j \ge k$, this establishes the desired conclusion, so we may suppose that $n-j<k$. Since $n = 2k+1$ and $n \ge j$, we have $2n \ge 2k+1 + j$, which implies that
$j \ge 2(k - (n-j)) + 1$. We now apply the above work to the subform $\langle a_1, \ldots, a_m \rangle$
where $m = 2(k - (n-j)) + 1 \le j.$ Then there exists a non-real quadratic extension $L$ of $K$ such that $\langle a_1, \ldots, a_m \rangle$ vanishes on an $L$-linear space of dimension at least $k - (n-j)$, and thus the same holds for $\langle a_1, \ldots, a_j \rangle$. Then $\langle a_1, \ldots, a_j, a_{j+1}, \ldots, a_n \rangle$ vanishes on an $L$-linear space of dimension at least $k - (n-j) + (n-j) = k$.

\section{Proof of the main result}

To prove Theorem~\ref{f18}, recall that $C \in K[X_1, \ldots, X_n]$ is a
cubic form with $n \ge 2\delta_K+3$.
As usual, we introduce a $K$-rational symmetric trilinear form $\Phi$ having the property that
$C(\mathbf{X})=\Phi(\mathbf{X}, \mathbf{X}, \mathbf{X})$.
As $\delta_K \ge 10$ we have $n \ge 2\delta_K+3 \ge 16$,
so by \cite{P} there exists a base point $\mathbf{x} \in K^n \backslash \{\mathbf{0}\}$ with $C(\mathbf{x})=0$.
Our task is now to find a point $\mathbf y \in K^n$
such that the pair $(\mathbf x, \mathbf y)$ spans a line that is entirely contained in the hypersurface $C=0$. 

Put
\[
	\Lambda(\mathbf{Y}) = \Phi(\mathbf{x}, \mathbf{x}, \mathbf{Y}) \quad \text{ and } \quad Q(\mathbf{Y}) = \Phi(\mathbf{x}, \mathbf{Y}, \mathbf{Y}),
\]
noting that 
\[
	C(\mathbf x + \mu \mathbf Y) = 3\mu \Lambda(\mathbf Y) + 3\mu^2 Q(\mathbf Y) + \mu^3 C(\mathbf Y).
\]
To ensure that the points $\mathbf x$ and $\mathbf y$ are linearly independent, choose any linear form $M \in K[X_1, \ldots, X_n]$ with $M(\mathbf{x}) \ne 0$.
We thus want to solve the system of two linear, one quadratic and one cubic 
$K$-rational Diophantine equations
\begin{equation*}
  	\Lambda(\mathbf{y})=M(\mathbf{y})=Q(\mathbf{y})=C(\mathbf{y})=0.
\end{equation*}
Denote by $H \simeq K^{m}$ the affine space defined by $M=\Lambda=0$, and denote by $C_1$ and $Q_1$ the restrictions of the forms $C$ and $Q$ to $H$.
As $m \ge n-2 \ge 2\delta_K + 1$, Theorem~\ref{bell} shows that there exists a non-real quadratic field extension $L$ of $K$ such that $Q_1$ vanishes on an $L$-linear space $V$ of dimension at least $\delta_K$. Writing $C_2$ for the restriction of $C_1$ to $V$, it now follows from the definition of $\delta_K$ that $C_2$ has a non-trivial zero $\mathbf y_1 \in V$. This point can in a natural manner be lifted to a non-zero point $\mathbf y \in L^n$. We thus have found a pair of linearly independent points
$\mathbf x \in K^n$ and $\mathbf y \in L^n$,
such that $C$ vanishes on the $L$-rational linear space spanned by
$\mathbf x$ and $\mathbf y$.
In this situation, Lemma 2.2 in \cite{W} enables us to pull back the
$L$-linear space spanned by $\mathbf{x}$ and $\mathbf{y}$ to a
$K$-linear two-dimensional space. We thus obtain a $K$-rational projective
line on $C=0$ as required.

Regarding the proof of Theorem~\ref{top_gun}, parts (a) and (c) follow immediately from Theorem~\ref{f18} after inserting the bounds $\delta_K \le 16$ and $\delta_\Q \le 14$, respectively. For part (b), we first follow the previous proof of Theorem \ref{f18}, but then instead of invoking Theorem \ref{bell} and working over a quadratic extension, stay in the ground field $K$. This is possible as $K$ is imaginary quadratic, so we can use \cite[Corollary 2.4]{Leep} instead: As $n-2 \ge 2 \gamma_K+3$ due to $\gamma_K \le 14$, we can find a linear space $V$ of dimension at least $\gamma_K$ on which $Q_1$ vanishes. We can then continue in a similar but simpler way as before as we
do not need to pull back our line from a quadratic extension.

\section{Sharpness}

We now show that the condition $n \ge 2k + 1$ in Theorem \ref{bell} is sharp in general. Suppose that $n = 2k$ with $k \ge 2$. We will find $a,b,d \in K^{\times}$ and a nonzero prime ideal $\mathfrak{p}$ in $\mathcal{O}$ that satisfy the following conditions: \begin{enumerate}
	\item $d \notin (K^{\times})^2$.
	\item $d \notin \mathfrak{p}$ and $d \in (K_{\mathfrak{p}}^{\times})^2$.
	\item $\langle 1, a, b, abd \rangle$ is anisotropic over $K_{\mathfrak{p}}$.
\end{enumerate}
Suppose first that $a,b,d$ and $\mathfrak{p}$ have been found as above. We let
\[
	Q = X_1^2 + a X_2^2 + b X_3^2 + abd X_4^2 + X_5 X_6 + \cdots + X_{n-1} X_n.
\] 
Since $Q$ already splits off $\frac{n-4}{2}$ hyperbolic planes, in order for $Q$ to vanish on an affine $L$-linear space of dimension $k = \frac{n}{2}$, for some quadratic field extension $L$ of $K$, the quadratic form 
\[
	G = X_1^2 + a X_2^2 + b X_3^2 + abd X_4^2
\] 
would have to vanish on a two-dimensional $L$-linear space. This is possible only if $\det G$ is a square in $L$, which forces $L = K(\sqrt{d})$.  We have $[L:K] = 2$ by condition (1).

However, $G$ is anisotropic over $K_{\mathfrak{p}} = K_{\mathfrak{p}}(\sqrt{d})$ by conditions (2) and (3), and thus $G$ is anisotropic over $L$ because $L$ is isomorphic to a subfield of $K_{\mathfrak{p}}(\sqrt{d})$. We conclude that there is no quadratic extension $L$ of $K$ such that $Q$ vanishes on an $L$-linear space of dimension $k$.

Now we find $a,b,d$ and $\mathfrak{p}$ satisfying conditions (1)--(3). Let $\mathfrak{p}$ and $\mathfrak{q}$ be distinct nonzero prime ideals in $\mathcal{O}$. By weak approximation, choose $d \in \mathcal{O}$ such that $v_{\mathfrak{q}}(d) = 1$ and $d$ is sufficiently close to $1$ in the $\mathfrak{p}$-adic topology such that $d \in (K_{\mathfrak{p}}^{\times})^2$. Then (1) holds because $v_{\mathfrak{q}}(d) = 1$ and (2) holds by the $\mathfrak{p}$-adic condition on $d$.
As $K$ is dense in $K_{\mathfrak p}$, by \cite{Lam}[Chapter VI, Corollary 2.11, p.~158] or \cite{OM}[63:18, p.~170],
there exist $a,b \in K^{\times}$ such that $\langle 1, a, b, ab \rangle$ is anisotropic over $K_{\mathfrak{p}}$. Since $d \in (K_{\mathfrak{p}}^{\times})^2$, it follows that (3) holds.

\bibliography{bdl}
\bibliographystyle{plain}

\end{document}